\documentclass[10pt]{amsart}%
\usepackage{amsfonts}
\usepackage{amsmath}
\usepackage{amssymb}
\usepackage{graphicx}%
\setcounter{MaxMatrixCols}{30}
\providecommand{\U}[1]{\protect\rule{.1in}{.1in}}
\newtheorem{theorem}{Theorem}
\theoremstyle{plain}

\numberwithin{equation}{section}
\begin{document}
\title[On the Bohnenblust-Hille and Littlewood's $4/3$ inequalities]{On the Bohnenblust-Hille inequality and a variant of Littlewood's $4/3$ inequality}
\author{Daniel Nu\~{n}ez-Alarc\'{o}n, Daniel Pellegrino, J.B. Seoane-Sep\'{u}lveda}
\address{Departamento de Matem\'{a}tica, UFPB, Jo\~{a}o Pessoa, PB, Brazil\\
Departamento de An\'{a}lisis Matem\'{a}tico,\\
\indent Facultad de Ciencias Matem\'{a}ticas, \\
\indent Plaza de Ciencias 3, \\
\indent Universidad Complutense de Madrid,\\
\indent Madrid, 28040, Spain.}
\email{jseoane@mat.ucm.es}
\subjclass[2010]{46G25, 47L22, 47H60.}
\keywords{Bohnenblust-Hille Theorem, Littlewood's $4/3$ inequality, Steinhaus random variables.}

\begin{abstract}
The search for sharp constants for inequalities of the type Littlewood's $4/3$
and Bohnenblust-Hille, besides its pure mathematical interest, has shown
unexpected applications in many different fields, such as Analytic Number
Theory, Quantum Information Theory, or (for instance) in deep results on the
$n$-dimensional Bohr radius. The recent estimates obtained for the multilinear
Bohnenblust-Hille inequality (in the case of real scalars) have been recently
used, as a crucial step, by A. Montanaro in order to solve problems in the
theory of quantum XOR games. Here, among other results, we obtain new upper
bounds for the Bohnenblust-Hille constants in the case of complex scalars. For
bilinear forms, we obtain the optimal constants of variants of Littlewood's
$4/3$ inequality (in the case of real scalars) when the exponent $4/3$ is
replaced by any $r\geq\frac{4}{3}.$ As a consequence of our estimates we show
that the optimal constants for the real case are always strictly greater than
the constants for the complex case.

\end{abstract}
\maketitle


\section{Introduction}

Let $\mathbb{K}$ stand for either $\mathbb{R}$ or $\mathbb{C}$. Littlewood's
$4/3$ inequality \cite{Litt} (see also \cite{garling}) asserts that there is a
constant $L_{\mathbb{K}}\geq1$ such that
\[
\left(  \sum\limits_{i,j=1}^{N}\left\vert U(e_{i},e_{j})\right\vert ^{\frac
{4}{3}}\right)  ^{\frac{3}{4}}\leq L_{\mathbb{K}}\left\Vert U\right\Vert
\]
for every bilinear form $U:\ell_{\infty}^{N}\times\ell_{\infty}^{N}%
\rightarrow\mathbb{K}$ and every positive integer $N.$ It is well known that
the exponent $4/3$ is optimal and it was recently shown in \cite{DD2} that the
constant $L_{\mathbb{R}}=\sqrt{2}$ is also optimal. For complex scalars we
just know that $L_{\mathbb{C}}\leq2/\sqrt{\pi}.$

However, if we replace $4/3$ by $r>4/3$, it is not difficult to prove that the
optimal constant $L_{\mathbb{K},r}$ satisfying
\begin{equation}
\left(  \sum\limits_{i,j=1}^{N}\left\vert U(e_{i},e_{j})\right\vert
^{r}\right)  ^{\frac{1}{r}}\leq L_{\mathbb{K},r}\left\Vert U\right\Vert
\label{p9}%
\end{equation}
is smaller than $\sqrt{2}$ (real case) and $2/\sqrt{\pi}$ (complex case). In
this note, among other results, we obtain the optimal constants $L_{\mathbb{R}%
,r}$ for all $r\geq\frac{4}{3};$ in fact, we prove that%
\[
L_{\mathbb{R},r}=\left\{
\begin{array}
[c]{c}%
2^{\frac{2-r}{r}}\text{ for }r\in\left[  \frac{4}{3},2\right)  \\
1\text{ for }r\geq2.
\end{array}
\right.
\]
As a consequence of our estimates we show that
\[
L_{\mathbb{R},r}>L_{\mathbb{C},r}%
\]
for all $r\in\left[  \frac{4}{3},2\right)  .$

Bohnenblust and Hille's inequality \cite{bh} is an improvement of Littlewood's
$4/3$ inequality, generalized to multilinear forms (see also \cite{annals2011,
defant2, defant} for recent approaches): for every positive integer $m$ there
is a constant $C_{m}\geq1$ so that
\[
\left(  \sum\limits_{i_{1},\ldots,i_{m}=1}^{N}\left\vert U(e_{i_{^{1}}}%
,\ldots,e_{i_{m}})\right\vert ^{\frac{2m}{m+1}}\right)  ^{\frac{m+1}{2m}}\leq
C_{m}\sup_{z_{1},...,z_{m}\in\mathbb{D}^{N}}\left\vert U(z_{1},...,z_{m}%
)\right\vert
\]
for every $m$-linear form $U:\ell_{\infty}^{N}\times\cdots\times\ell_{\infty
}^{N}\rightarrow\mathbb{C}$ and every positive integer $N$ (for polynomial
versions of the Bohnenblust-Hille inequality we refer to \cite{annals2011}).
The first upper estimate for $C_{m}$ is $m^{\frac{m+1}{2m}}2^{\frac{m-1}{2}}$,
which was further improved to $2^{\frac{m-1}{2}}$ in \cite{Ka}, to $\left(
\frac{2}{\sqrt{\pi}}\right)  ^{m-1}$ in \cite{Que} and, recently even better
constants, with optimal asymptotic behavior, were obtained in \cite{jmaaBH,
DD} (for related results see \cite{DD2, LAMA, APa}).

The original motivation of the Bohnenblust-Hille inequality rests on the famous Bohr's
absolute convergence problem, which consists in determining the maximal width
$T$ of the vertical strip in which a Dirichlet series ${\textstyle\sum
\limits_{n=1}^{\infty}}a_{n}n^{-s}$ converges uniformly but not absolutely.
The Bohnenblust-Hille inequality is a crucial tool to give a final solution to
Bohr's problem: $T=1/2$.

In Section 2 we improve the best known constants for the complex
Bohnenblust--Hille inequality. Besides the intrinsic mathematical interest of finding sharper constants for famous inequalities,
the search for better constants in Bohnenblust--Hille type inequalities has a
long history motivated by concrete goals. As an illustration we recall that,
in 2011, by proving that the polynomial Bohnenblust--Hille inequality is
hypercontractive, A. Defant, L. Frerick, J. Ortega-Cerd\'{a}, M. Ouna\"{\i}es
and K. Seip obtained, as consequence, several new results related to the study
of Dirichlet series. For instance, they obtain an ultimate generalization of a
result by H. P. Boas and D. Khavinson \cite{boas} on the $n$-dimensional Bohr radius. As we already mentioned in the \emph{Abstract},
one of the most recent applications of the Bohnenblust-Hille inequality
resides in the field of Quantum Information Theory, since the exact growth of
$C_{m}$ is related to a conjecture of Aaronson and Ambainis \cite{AA} about
classical simulations of quantum query algorithms (see, also, \cite{junge2010}%
). We also mention \cite{montanaro} for applications of the estimates from
\cite{jmaaBH} to Quantum Information Theory.


\section{The role of Steinhaus variables. Improving the constants in the
Bohnenblust--Hille inequality}

Let $\varepsilon_{1},\ldots,\varepsilon_{n}$ be a sequence of independent
random variables on some probability space $\left(  \Omega,\Sigma,P\right)  ,$
having uniform (with respect to the Lebesgue measure) distribution on the
complex unit-circle
\[
\left\{  z\in\mathbb{C}:\left\vert z\right\vert =1\right\}  .
\]
These are the so-called Steinhaus random variables. The usefulness of Steinhaus random variables in the proof of the
Bohnenblust--Hille inequality seems to have been first observed by H.
Queff\'{e}lec \cite{Que}. In our present approach we change the proof
presented in \cite{defant, jmaaBH} by replacing the usual Rademacher functions
by Steinhaus variables.

The first result allowing us to improve the constants of the Bohnenblust--Hille
inequality is a technical inequality (Theorem \ref{d}) which is a version (now
for of Steinhaus variables) of a similar result presented in \cite{defant,
jmaaBH} for Rademacher functions. The crucial point in our argument is that
the constants which arise in Theorem \ref{d} are derived from the constants
that appear in the Khinchine inequality for Steinhaus variables and, as we
shall see at the end of the paper, this procedure generates sharper constants
for the Bohnenblust--Hille inequality.

Let us recall Khinchine's inequality (for Steinhaus variables) and other
useful result:

\begin{theorem}
[Khinchine's inequality]\label{kkk4} For every $0<p<\infty$, there exist
constants $\widetilde{A_{p}}$ and $\widetilde{B_{p}}$ such that
\begin{equation}
\widetilde{A_{p}}\left(  \sum_{n=1}^{N}\left\vert a_{n}\right\vert
^{2}\right)  ^{\frac{1}{2}}\leq\left\Vert \sum_{n=1}^{N}a_{n}\varepsilon
_{n}\right\Vert _{p}\leq\widetilde{B_{p}}\left(  \sum_{n=1}^{N}\left\vert
a_{n}\right\vert ^{2}\right)  ^{\frac{1}{2}} \label{lpo4}%
\end{equation}
for every positive integer $N$ and scalars $a_{1},\ldots,a_{N}.$
\end{theorem}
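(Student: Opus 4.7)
The plan is to reduce everything to the moment computation at even integer exponents, where the rotational invariance of the Steinhaus variables yields an especially clean closed form, and then pass to general $p$ by monotonicity and interpolation. Set $S=\sum_{n=1}^{N}a_{n}\varepsilon_{n}$. The case $p=2$ is immediate: since $E[\varepsilon^{j}\overline{\varepsilon}^{k}]=\delta_{jk}$ for a single Steinhaus variable (by uniform rotation), independence yields $E[\varepsilon_{n}\overline{\varepsilon_{m}}]=\delta_{nm}$, giving $\|S\|_{2}^{2}=\sum_{n}|a_{n}|^{2}$, so one may take $\widetilde{A_{2}}=\widetilde{B_{2}}=1$.

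For $p=2k$ with $k\in\mathbb{N}$, I would expand $|S|^{2k}=S^{k}\overline{S}^{k}$ by the multinomial theorem; the identity $E[\varepsilon^{\alpha}\overline{\varepsilon}^{\beta}]=\delta_{\alpha\beta}$ (combining independence with the single-variable computation) kills every cross term, leaving
\[
\|S\|_{2k}^{2k}=\sum_{|\alpha|=k}\binom{k}{\alpha}^{2}\prod_{n}|a_{n}|^{2\alpha_{n}}.
\]
Bounding one of the two multinomial factors by $\binom{k}{\alpha}\leq k!$ collapses this to $k!\bigl(\sum_{n}|a_{n}|^{2}\bigr)^{k}$, whence $\widetilde{B_{2k}}\leq(k!)^{1/(2k)}$. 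For arbitrary $p\geq 2$, monotonicity of $L^{p}$ norms on a probability space extends this to $\|S\|_{p}\leq\|S\|_{2k}$ for any integer $2k\geq p$, while for $0<p<2$ one trivially has $\widetilde{B_{p}}\leq 1$.

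The lower bound I would extract from log-convexity of $p\mapsto\|S\|_{p}$. When $0<p<2$, fix any $q>2$ (for concreteness $q=4$) and choose $\theta\in(0,1)$ with $\tfrac{1}{2}=\tfrac{\theta}{p}+\tfrac{1-\theta}{q}$; then
\[
\|S\|_{2}\leq\|S\|_{p}^{\theta}\|S\|_{q}^{1-\theta}\leq\|S\|_{p}^{\theta}\bigl(\widetilde{B_{q}}\|S\|_{2}\bigr)^{1-\theta},
\]
which rearranges into $\|S\|_{p}\geq\widetilde{B_{q}}^{-(1-\theta)/\theta}\|S\|_{2}$, a positive constant $\widetilde{A_{p}}$. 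For $p\geq 2$ the trivial $\|S\|_{p}\geq\|S\|_{2}$ lets one take $\widetilde{A_{p}}=1$.

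The main obstacle is not existence but \emph{sharpness}: both the crude bound $\binom{k}{\alpha}^{2}\leq k!\binom{k}{\alpha}$ and the interpolation step are wasteful. Since the later sections use $\widetilde{A_{p}}$ and $\widetilde{B_{q}}$ \emph{quantitatively} to tighten the Bohnenblust--Hille constants, the delicate task will be to track, or to import from the literature, the optimal values of these constants, which are accessible through a comparison with complex Gaussians via the central limit theorem or through Haagerup-type computations on the circle.
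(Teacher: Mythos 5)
The paper does not prove this theorem at all: it is quoted as a classical fact, with the reader referred to Pe\l czy\'nski, Baernstein--Culverhouse, and Sawa for the Steinhaus case and to Diestel--Jarchow--Tonge for the Rademacher analogue. So there is no in-paper proof to compare against; what follows is an assessment of your argument on its own terms.

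Your existence proof is correct and essentially self-contained. The orthogonality relation $E[\varepsilon^{j}\overline{\varepsilon}^{k}]=\delta_{jk}$ together with independence kills the cross terms in the multinomial expansion of $|S|^{2k}$, giving
$\|S\|_{2k}^{2k}=\sum_{|\alpha|=k}\binom{k}{\alpha}^{2}\prod_{n}|a_{n}|^{2\alpha_{n}}$,
and the estimate $\binom{k}{\alpha}^{2}\leq k!\,\binom{k}{\alpha}$ collapses this to $k!\,\bigl(\sum_{n}|a_{n}|^{2}\bigr)^{k}$, so $\widetilde{B_{2k}}\leq(k!)^{1/(2k)}$; monotonicity of $L^{p}$ norms on a probability space fills in the remaining $p$ for the upper bound, and the Lyapunov (log-convexity) interpolation $\|S\|_{2}\leq\|S\|_{p}^{\theta}\|S\|_{q}^{1-\theta}$ converts the upper constant at some $q>2$ into a positive lower constant at each $p<2$. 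A small bonus you did not remark on: for even exponents your seemingly crude bound is in fact sharp, $\widetilde{B_{2k}}=(k!)^{1/(2k)}$, by letting $N\to\infty$ with equal coefficients and invoking the central limit theorem (the limit is a standard complex Gaussian, whose $2k$-th absolute moment is $k!$). What your argument does not deliver, as you correctly flag, is the sharp \emph{lower} constant that the paper actually uses quantitatively, namely $\widetilde{A_{p}}=\bigl(\Gamma(\frac{p+2}{2})\bigr)^{1/p}$ for $p\geq1$ (Sawa for $p=1$, K\"onig for $p\geq1$); the interpolation step is intrinsically lossy there, and those values must be imported from the literature, as the paper does.
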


From \cite[p. 151]{pel} we know that
\begin{equation}
A_{p}\leq\widetilde{A_{p}} \label{ppee}%
\end{equation}
for all $p$ (here $A_{p}$ denotes the constants that appear in the place of
$\widetilde{A_{p}}$ in Khinchine's inequality for Rademacher functions). For
example, when $p=1$ it is well known that $A_{p}=\frac{1}{\sqrt{2}}%
\approx0.707$ and $\widetilde{A_{p}}=\frac{\sqrt{\pi}}{2}\approx
\allowbreak0.886.$

For details on the Khinchine inequalities we refer to \cite[Theorem 1.10]{Di}
for the case of Rademacher functions and to \cite[Section 2]{ba} for more
general cases, including the case of Steinhaus variables.

The following result, crucial for the proof of the Bohnenblust--Hille
inequality, has essentially the same proof of its analogous for Rademacher
functions (see \cite{defant, jmaaBH}).

\begin{theorem}
\label{d} Let $1\leq r\leq2$, and let $(y_{i_{1},\ldots,i_{m}})_{i_{1}%
,\ldots,i_{m}=1}^{N}$ be a matrix in $\mathbb{C}$. Then
\[
\left(  \sum\limits_{i_{1},\ldots,i_{m}=1}^{N}\left\vert y_{i_{1},\ldots
,i_{m}}\right\vert ^{2}\right)  ^{1/2}\leq\left(  \widetilde{A_{r}}\right)
^{-m}\left\Vert \sum\limits_{i_{1},\ldots,i_{m}=1}^{N}\varepsilon_{i_{1}%
}...\varepsilon_{i_{m}}y_{i_{1}\ldots i_{m}}\right\Vert _{r}.
\]

\end{theorem}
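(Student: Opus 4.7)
The plan is to prove the inequality by induction on $m$, mirroring the Rademacher argument from \cite{defant, jmaaBH} with the Steinhaus version of Khinchine (Theorem \ref{kkk4}) replacing the Rademacher version at each step. The base case $m=1$ is immediate from the lower bound in (\ref{lpo4}) applied to the scalars $y_{i_1}$.

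For the inductive step, I would regard the Steinhaus variables attached to each of the $m$ coordinate slots as independent copies $\varepsilon_i^{(1)},\ldots,\varepsilon_i^{(m)}$ living on a product probability space, so that Fubini applies (this is how the analogous Rademacher statement is set up). Fixing $\omega_1,\ldots,\omega_{m-1}$, introduce the scalars
\[
z_{i_m}(\omega_1,\ldots,\omega_{m-1}) := \sum_{i_1,\ldots,i_{m-1}=1}^{N}\varepsilon_{i_1}^{(1)}(\omega_1)\cdots\varepsilon_{i_{m-1}}^{(m-1)}(\omega_{m-1})\, y_{i_1\ldots i_m}.
\]
Applying Theorem \ref{kkk4} in the $\omega_m$-variable to the inner sum $\sum_{i_m}\varepsilon_{i_m}^{(m)}z_{i_m}$ and then integrating the $r$-th power over $(\omega_1,\ldots,\omega_{m-1})$ (Fubini) and extracting the $r$-th root yields
\[
\Bigl\|\sum_{i_1,\ldots,i_m}\varepsilon_{i_1}^{(1)}\cdots\varepsilon_{i_m}^{(m)}y_{i_1\ldots i_m}\Bigr\|_r \;\geq\; \widetilde{A_r}\,\Bigl\|\Bigl(\sum_{i_m=1}^{N}|z_{i_m}|^2\Bigr)^{1/2}\Bigr\|_r.
\]
Next, since $r\leq 2$, Minkowski's integral inequality in the mixed-norm form $\|\cdot\|_{L^r(\ell^2)}\geq\|\cdot\|_{\ell^2(L^r)}$ permits pulling the $\ell^2_{i_m}$-sum outside the $L^r$-norm:
\[
\Bigl\|\Bigl(\sum_{i_m=1}^{N}|z_{i_m}|^2\Bigr)^{1/2}\Bigr\|_r \;\geq\; \Bigl(\sum_{i_m=1}^{N}\|z_{i_m}\|_r^{\,2}\Bigr)^{1/2}.
\]
Finally, applying the induction hypothesis to each $(m-1)$-fold random sum $z_{i_m}$ gives $\|z_{i_m}\|_r\geq \widetilde{A_r}^{\,m-1}\bigl(\sum_{i_1,\ldots,i_{m-1}}|y_{i_1\ldots i_m}|^2\bigr)^{1/2}$, and chaining the three displayed inequalities produces $\widetilde{A_r}^m$ in front of the full $\ell^2$-norm of the matrix $(y_{i_1,\ldots,i_m})$, which is the claim upon rearrangement.

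The only delicate point is the direction of Minkowski in the third display: it is valid precisely because $r\leq 2$ (for $r>2$ it reverses and the induction collapses). Everything else is bookkeeping—Khinchine's Steinhaus inequality contributes one factor of $\widetilde{A_r}$ per coordinate peeled off, and the careful organization of the product probability space via Fubini ensures that the inductive structure cleanly assembles into the factor $\widetilde{A_r}^{-m}$.
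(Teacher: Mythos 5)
Your proof is correct and follows exactly the route the paper has in mind: the paper omits the argument, stating only that it is ``essentially the same'' as the Rademacher-function version in \cite{defant, jmaaBH}, and your induction via iterated Khinchine (one factor of $\widetilde{A_r}$ peeled off per coordinate on a product probability space, Fubini) combined with the $r\le 2$ direction of Minkowski's mixed-norm inequality $\|\cdot\|_{L^r(\ell^2)}\ge\|\cdot\|_{\ell^2(L^r)}$ is precisely that standard argument with $\widetilde{A_r}$ in place of $A_r$.
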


In view of (\ref{ppee}) we conclude that the constants $\left(  \widetilde
{A_{r}}\right)  ^{-m}$ are not greater than the constants from its analogous
for Rademacher functions and for this reason we shall have better estimates
for the constants in the Bohnenblust--Hille inequality.

The proof of the Bohnenblust--Hille inequality is (replacing the Rademacher
functions for Steinhaus variables) the same proof as that from \cite{jmaaBH}.
The difference in the constants is a consequence from the new constants from
the Khinchine inequality for Steinhaus variables.

\begin{theorem}
\label{aaa} If $m\geq1$, then
\[
\left(  \sum\limits_{i_{1},\ldots,i_{m}=1}^{N}\left\vert U(e_{i_{^{1}}}%
,\ldots,e_{i_{m}})\right\vert ^{\frac{2m}{m+1}}\right)  ^{\frac{m+1}{2m}}\leq
C_{m}\sup_{z_{1},...,z_{m}\in\mathbb{D}^{N}}\left\vert U(z_{1},...,z_{m}%
)\right\vert
\]
for every $m$-linear form $U:\ell_{\infty}^{N}\times\cdots\times\ell_{\infty
}^{N}\rightarrow\mathbb{C}$ and every positive integer $N$, with
\[
C_{1}=1,
\]
\[
C_{m}=\frac{C_{m/2}}{\left(  \widetilde{A_{\frac{2m}{m+2}}}\right)  ^{m/2}}%
\]
for $m$ even and
\[
C_{m}=\left(  \frac{C_{\frac{m-1}{2}}}{\left(  \widetilde{A_{\frac{2m-2}{m+1}%
}}\right)  ^{\frac{m+1}{2}}}\right)  ^{\frac{m-1}{2m}}\left(  \frac
{C_{\frac{m+1}{2}}}{\left(  \widetilde{A_{\frac{2m+2}{m+3}}}\right)
^{\frac{m-1}{2}}}\right)  ^{\frac{m+1}{2m}}.
\]
for $m$ odd.
\end{theorem}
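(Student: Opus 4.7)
I will induct on $m$, following the proof of the Bohnenblust--Hille inequality as presented in \cite{jmaaBH} and simply substituting Theorem~\ref{d} for its Rademacher analogue at the key Khinchine step. The base case $m=1$ reads $\sum_{i=1}^{N}|U(e_{i})|\leq C_{1}\sup_{z\in\mathbb{D}^{N}}|U(z)|$, which holds with $C_{1}=1$: choose $z_{i}=\overline{U(e_{i})}/|U(e_{i})|$ (and $z_{i}=1$ whenever $U(e_{i})=0$) to obtain equality.

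For the inductive step with $m=2k$ even, split each $m$-tuple into halves $\mathbf{i},\mathbf{j}$ of length $k$, set $a_{\mathbf{i},\mathbf{j}}:=U(e_{i_{1}},\ldots,e_{i_{m}})$, and fix $r:=2m/(m+2)=2k/(k+1)$, which is simultaneously the BH exponent of the $k$-linear problem and the exponent at which Theorem~\ref{d} will be used. Following \cite{jmaaBH}, dominate $\bigl(\sum_{\mathbf{i},\mathbf{j}}|a_{\mathbf{i},\mathbf{j}}|^{2m/(m+1)}\bigr)^{(m+1)/(2m)}$ by the iterated mixed norm $\bigl(\sum_{\mathbf{i}}(\sum_{\mathbf{j}}|a_{\mathbf{i},\mathbf{j}}|^{2})^{r/2}\bigr)^{1/r}$ via Minkowski's inequality. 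Apply Theorem~\ref{d} in the $\mathbf{j}$-block to replace the inner $\ell_{2}$-sum by the $L_{r}$-norm of the Steinhaus polynomial $\sum_{\mathbf{j}}\varepsilon_{j_{1}}\cdots\varepsilon_{j_{k}}a_{\mathbf{i},\mathbf{j}}$, gaining a factor $(\widetilde{A_{r}})^{-k}$. By Fubini, the resulting integrand is exactly $\sum_{\mathbf{i}}|W_{\omega}(e_{i_{1}},\ldots,e_{i_{k}})|^{r}$, where $W_{\omega}$ is the random $k$-linear form obtained by freezing the last $k$ slots of $U$ at a point of $\mathbb{D}^{N}$, so that $\Vert W_{\omega}\Vert\leq\Vert U\Vert$ pointwise. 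Applying the inductive hypothesis for $C_{k}$ to $W_{\omega}$ (whose BH exponent is precisely $r$) yields the stated recursion $C_{m}=C_{m/2}/(\widetilde{A_{2m/(m+2)}})^{m/2}$.

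For $m=2k+1$ odd, I run the even-step argument with two splittings of sizes $(k,k+1)$ and $(k+1,k)$. Each produces a bound involving, respectively, $C_{k}/(\widetilde{A_{2(m-1)/(m+1)}})^{(m+1)/2}$ and $C_{k+1}/(\widetilde{A_{2(m+1)/(m+3)}})^{(m-1)/2}$, coming from Theorem~\ref{d} applied at exponents $2(m-1)/(m+1)$ and $2(m+1)/(m+3)$. Interpolating the two estimates by H\"older's inequality with weights $(m-1)/(2m)$ and $(m+1)/(2m)$ produces the weighted geometric mean appearing in the statement.

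The main obstacle is the mixed-norm (Minkowski) step: it must dominate the $\ell_{2m/(m+1)}$-norm by an $\ell_{r}(\ell_{2})$-norm without introducing any $N$-dependent factor, which forces a specific pairing of exponents between Minkowski, Theorem~\ref{d}, and the BH exponent of the smaller block. Once this exponent bookkeeping is arranged as in \cite{jmaaBH}, the argument transplants verbatim, and the only change in the final constants comes from the permissible replacement $A_{r}\to\widetilde{A_{r}}$.
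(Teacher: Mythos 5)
Your overall strategy --- replicate the proof of \cite{jmaaBH} verbatim and replace the Rademacher Khinchine constants by Steinhaus ones via Theorem~\ref{d} --- is precisely what the paper does, and the paper itself offers no more detail than that one-line prescription; your description of the Khinchine/Fubini/induction step is accurate. However, your account of the mixed-norm step contains a genuine error. The inequality you assert ``via Minkowski's inequality,'' namely
\[
\left(\sum_{\mathbf{i},\mathbf{j}}\left\vert a_{\mathbf{i},\mathbf{j}}\right\vert^{\frac{2m}{m+1}}\right)^{\frac{m+1}{2m}}\leq\left(\sum_{\mathbf{i}}\Bigl(\sum_{\mathbf{j}}\left\vert a_{\mathbf{i},\mathbf{j}}\right\vert^{2}\Bigr)^{r/2}\right)^{1/r},\qquad r=\tfrac{2m}{m+2},
\]
is false: already for a matrix with a single nonzero row the left-hand side is the $\ell_{2m/(m+1)}$-norm of that row and the right-hand side is its $\ell_{2}$-norm, and since $2m/(m+1)<2$ the left side is the larger. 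What the argument of \cite{jmaaBH,defant} actually uses at this point is the Blei-type inequality recorded in the paper as Theorem~\ref{b}. For $m=2k$, taking $q=2$ and $s_{1}=s_{2}=r$ yields a \emph{geometric mean} of two mixed norms, one for each ordering of the two blocks, with equal weights $f(s_{1},s_{2})=f(s_{2},s_{1})=1/2$; since both factors are bounded identically by Theorem~\ref{d} followed by the inductive hypothesis, the two half-powers combine to the stated $C_{m}=C_{m/2}/(\widetilde{A_{r}})^{m/2}$. For $m=2k+1$, a single application of Theorem~\ref{b} with the unequal choices $s_{1}=2(m-1)/(m+1)$, $s_{2}=2(m+1)/(m+3)$ already produces the weights $(m-1)/(2m)$ and $(m+1)/(2m)$ through $f(s_{1},s_{2})$ and $f(s_{2},s_{1})$; there is no need for (nor is it correct to attempt) a separate H\"older interpolation of two full runs of the even-case argument, each of which would again stumble on the false single-mixed-norm step. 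Once Theorem~\ref{b} replaces your Minkowski claim, the remainder of your outline --- in particular the replacement $A_{r}\to\widetilde{A_{r}}$, which is the sole source of the improved constants --- is correct and matches the paper.
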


As mentioned before, from \cite{pel} we know that $A_{p}\leq\widetilde{A_{p}}
$ for all $p$ and we easily conclude that the constants from Theorem \ref{aaa}
are better than the constants from the similar result from \cite{jmaaBH}. For
$p=1$, J. Sawa \cite{sawa} has shown that the best value for $\widetilde
{A_{p}}$ is
\[
\widetilde{A_{1}}=\frac{\sqrt{\pi}}{2}.
\]
Since $C_{1}=1$, we have
\[
C_{2}=\frac{2}{\sqrt{\pi}},
\]
as obtained previously by Queff\'{e}lec \cite{Que}. The evaluation of the precise values
for $C_{m}$ rests on the evaluation of precise values for $\widetilde{A_{p}}$
with
\[
p\in\left\{  \frac{2m}{m+2}:m\geq2\right\}  \cup\left\{  \frac{2m-2}%
{m+1}:m\geq3\right\}  \cup\left\{  \frac{2m+2}{m+3}:m\geq3\right\}
\subset\lbrack1,2).
\]

As an \textquotedblleft\textit{Added in proof} \textquotedblright\ in the same
paper \cite{sawa}, J. Sawa asserts that the sharpest constants for the
parameter $p$, with $p_{0}<p<2$ and $p_{0}\in\left(  0,2\right)  $ defined as
the unique root of the equation
\[
2^{p/2} \cdot\Gamma\left(  \frac{p+1}{2}\right)  =\sqrt{\pi}\left(
\Gamma\left(  \frac{p+2}{2}\right)  \right)  ^{2}%
\]
are
\begin{equation}
\widetilde{A_{p}}=\left(  \Gamma\left(  \frac{p+2}{2}\right)  \right)
^{\frac{1}{p}}. \label{hhh}%
\end{equation}
A $4$-digit approximation provides $p_{0}\approx0.4756.$ However, Sawa
presented no proof for his claim. But, fortunately, for $p\geq1$ H. K\"{o}nig
proved that \eqref{hhh} is, in fact, the precise value of $\widetilde{A_{p}}$
(see \cite[Section 2]{ba} and references therein). Using these values for
$\widetilde{A_{p}}$ we construct the following table, where one can check the
different estimates for $C_{m}$ that have been obtained so far.

\bigskip%

\begin{tabular}
[c]{c|c|c|c|c|c}%
$m$ & new constants & {\tiny \cite{jmaaBH} (2012)} & $\left(  \frac{2}
{\sqrt{\pi}}\right)  ^{m-1}${\tiny (\cite{Que, defant2}, 1995)} &
$2^{\frac{m-1}{2}}${\tiny (\cite{Ka},1978)} & $m^{\frac{m+1}{2m}}2^{\frac
{m-1}{2}}${\tiny (\cite{bh},1931)}\\\hline
$2$ & $\approx1.1284$ & $-$ & $\approx1.1284$ & $\approx1.414$ &
$\approx2.378$\\\hline
$3$ & $\approx1.2364$ & $-$ & $\approx1.273$ & $2$ & $\approx4.160$\\\hline
$4$ & $\approx1.3155$ & $-$ & $\approx1.437$ & $\approx2.828$ & $\approx
6.726$\\\hline
$5$ & $\approx1.3982$ & $-$ & $\approx1.621$ & $4$ & $\approx10.506$\\\hline
$6$ & $\approx1.4637$ & $-$ & $\approx1.829$ & $\approx5.657$ & $\approx
16.088$\\\hline
$7$ & $\approx1.5224$ & $\approx1.929$ & $\approx2.064$ & $8$ & $\approx
24.322$\\\hline
$8$ & $\approx1.5714$ & $\approx2.031$ & $\approx2.329$ & $\approx11.313$ &
$\approx36.442$\\\hline
$9$ & $\approx1.6298$ & $\approx2.172$ & $\approx2.628$ & $16$ &
$\approx54.232$\\\hline
$10$ & $\approx1.6800$ & $\approx2.292$ & $\approx2.965$ & $\approx22.627$ &
$\approx80.283$\\\hline
$11$ & $\approx1.7256$ & $\approx2.449$ & $\approx3.346$ & $32$ &
$\approx118.354$\\\hline
$12$ & $\approx1.7659$ & $\approx2.587$ & $\approx3.775$ & $\approx45.425$ &
$\approx173.869$\\\hline
$13$ & $\approx1.8061$ & $\approx2.662$ & $\approx4.260$ & $64$ &
$\approx254.680$\\\hline
$14$ & $\approx1.8422$ & $\approx2.728$ & $\approx4.807$ & $\approx90.509$ &
$\approx372.128$\\\hline
$15$ & $\approx1.8757$ & $\approx2.805$ & $\approx5.425$ & $128$ &
$\approx542.574$\\\hline
$16$ & $\approx1.9060$ & $\approx2.873$ & $\approx6.121$ & $\approx181.019$ &
$\approx789.612$\\\hline
$100$ & $\approx3.2968$ & $\approx7.603$ & $\approx1.55973\cdot10^{3}$ &
$\approx7.96131459\cdot10^{14}$ & $\approx8.14675743\cdot10^{15}$%
\end{tabular}


\subsection{Remarks on the optimal constants satisfying the complex
Bohnenblust--Hille inequality}

\label{acx}

Let $\left(  K_{n}\right)  _{n=1}^{\infty}$ be the sequence of the best
constants satisfying the complex Bohnenblust--Hille inequality. In
\cite{AlarconPellegrino} it was recently shown that $\left(  K_{n}\right)
_{n=1}^{\infty}$ does not have a polynomial growth and, besides, if
\[
K_{n}\sim n^{q},
\]
then%
\[
0\leq q\leq\log_{2}\left(  \frac{e^{1-\frac{1}{2}\gamma}}{\sqrt{2}}\right)
\approx0.52632,
\]
where $\gamma$ denotes the famous Euler-Mascheroni constant
\[
\gamma= \lim_{m \rightarrow\infty} \left(  \sum_{k=1}^{m} \frac{1}{k} - \log
m\right)  \approx0.57721.
\]

Since $\widetilde{A_{p}}=\left(  \Gamma\left(  \frac{p+2}{2}\right)  \right)
^{\frac{1}{p}}$, we have%
\[
\widetilde{A_{\frac{2m}{m+2}}}=\left(  \Gamma\left(  \frac{\frac{2m}{m+2}%
+2}{2}\right)  \right)  ^{\frac{m+2}{2m}}%
\]
and
\[
\frac{C_{m}}{C_{m/2}}=\left(  \Gamma\left(  \frac{\frac{2m}{m+2}+2}{2}\right)
\right)  ^{\frac{-m-2}{4}}.
\]
Using some basic properties of the Gamma function we can prove that
\[
\lim_{m\rightarrow\infty}\frac{C_{m}}{C_{m/2}}=\lim_{m\rightarrow\infty
}\left(  \Gamma\left(  \frac{\frac{2m}{m+2}+2}{2}\right)  \right)
^{\frac{-m-2}{4}}=e^{\frac{1}{2}-\frac{1}{2}\gamma}\approx1.23539,
\]
and following the arguments from the Dichotomy Theorem (see
\cite{AlarconPellegrino}) we conclude that if $K_{n}\sim n^{q}$, then
\[
0<q\leq\log_{2}\left(  e^{\frac{1}{2}-\frac{1}{2}\gamma}\right)
\approx0.30497,
\]
as we wished. We mention that a more complete results on this line were
recently proved in \cite{APa}.

\section{The variants of Littlewood's $4/3$ inequality}

\subsection{Real case}

As mentioned in the Introduction, if we replace $4/3$ by $r>4/3$, then the
optimal constant $L_{\mathbb{K},r}$ satisfying
\begin{equation}
\left(  \sum\limits_{i,j=1}^{N}\left\vert U(e_{i},e_{j})\right\vert
^{r}\right)  ^{\frac{1}{r}}\leq L_{\mathbb{K},r}\left\Vert U\right\Vert
\end{equation}
is smaller than $\sqrt{2}.$ Our main goal is to find the optimal values of
$L_{\mathbb{K},r}$ for all $r\geq4/3:$

\begin{theorem}
\label{2_2} The optimal constant $L_{\mathbb{R},r}$ satisfying (\ref{p9}) is
\[
L_{\mathbb{R},r}=\left\{
\begin{array}
[c]{c}%
2^{\frac{2-r}{r}}\text{ for }r\in\left[  \frac{4}{3},2\right) \\
1\text{ for }r\geq2.
\end{array}
\right.
\]

\end{theorem}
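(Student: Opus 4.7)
The plan is to prove matching lower and upper bounds, pairing an explicit $2\times 2$ extremal form with an interpolation between two endpoint cases.

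For the lower bound in the range $r\in[4/3,2]$, I would test the inequality on the $2\times 2$ Littlewood-type bilinear form
\[
U(x,y)=\tfrac{1}{2}(x_1 y_1 + x_1 y_2 + x_2 y_1 - x_2 y_2)=\tfrac{1}{2}\bigl(x_1(y_1+y_2)+x_2(y_1-y_2)\bigr).
\]
The identity $|y_1+y_2|+|y_1-y_2|=2\max(|y_1|,|y_2|)$ forces $\|U\|=1$; since each $|U(e_i,e_j)|=1/2$, the $r$-sum equals $4\cdot 2^{-r}$ and its $r$-th root is exactly $2^{(2-r)/r}$, yielding $L_{\mathbb{R},r}\ge 2^{(2-r)/r}$. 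For $r\ge 2$ the trivial form $U(x,y)=x_1 y_1$ additionally provides $L_{\mathbb{R},r}\ge 1$.

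For the upper bound my strategy is to reduce to the two endpoint estimates $r=4/3$ (the sharp Littlewood constant $L_{\mathbb{R},4/3}=\sqrt{2}$ established in \cite{DD2}) and $r=2$, and then interpolate. The $r=2$ bound is the crucial intermediate step: writing $U(x,y)=\sum a_{ij}x_i y_j$ and averaging over independent $\pm 1$ Bernoulli vectors $\sigma$ and $\tau$, orthogonality yields
\[
\sum_{i,j=1}^{N}|U(e_i,e_j)|^{2}=\mathbb{E}\,|U(\sigma,\tau)|^{2}\le\|U\|^{2},
\]
so $L_{\mathbb{R},2}\le 1$. For $r\in[4/3,2)$ I would then invoke log-convexity of counting-measure $\ell^{p}$-norms: with $\theta=(3r-4)/r\in[0,1]$ satisfying $1/r=3(1-\theta)/4+\theta/2$, one has $\|a\|_{r}\le\|a\|_{4/3}^{\,1-\theta}\|a\|_{2}^{\,\theta}$, and plugging the two sharp endpoint bounds into $a_{ij}=U(e_i,e_j)$ gives
\[
L_{\mathbb{R},r}\le(\sqrt{2})^{(4-2r)/r}\cdot 1^{(3r-4)/r}=2^{(2-r)/r},
\]
matching the lower bound. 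Finally, for $r\ge 2$ the monotonicity $\|a\|_{r}\le\|a\|_{2}$ on counting measure, combined with the $r=2$ estimate and the trivial lower bound $L_{\mathbb{R},r}\ge 1$, yields $L_{\mathbb{R},r}=1$.

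I do not anticipate a serious obstacle: the deep ingredient is the sharp Littlewood input $L_{\mathbb{R},4/3}=\sqrt{2}$ imported from \cite{DD2}, while the rest reduces to a one-line random-signs orthogonality identity, standard log-convex interpolation of $\ell^{p}$-norms, and a single explicit $2\times 2$ extremal example, all of which are routine.
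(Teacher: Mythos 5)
Your proposal is correct and follows essentially the same route as the paper: the same extremal bilinear form $x_1y_1+x_1y_2+x_2y_1-x_2y_2$ (you merely normalize it by $1/2$), the same $\ell^2$ estimate $\sum_{i,j}|U(e_i,e_j)|^2\le\|U\|^2$ (the paper phrases it as the real scalar field having cotype $2$ with constant $1$, while you spell out the Bernoulli orthogonality $\mathbb{E}|U(\sigma,\tau)|^2=\sum|a_{ij}|^2$, which is the same fact), and the same log-convexity interpolation of $\ell^p$-norms between the endpoints $4/3$ and $2$ (you simply label $\theta$ as the complement of the paper's $\theta$). One small note on attribution: the upper bound $L_{\mathbb{R},4/3}\le\sqrt2$ used in the interpolation is classical Littlewood, not from \cite{DD2}; that reference supplies the matching lower bound, which your extremal form already reproduces.
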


\begin{proof}
The case $r\geq2$ is quite simple. In fact, one can use that the real scalar
field has cotype $2$ and its cotype constant is $1.$ Hence%
\begin{equation}
\left(  \sum\limits_{i,j=1}^{N}\left\vert U(e_{i},e_{j})\right\vert
^{2}\right)  ^{\frac{1}{2}}\leq\left\Vert U\right\Vert \label{aaaq}%
\end{equation}
for all $N$ and all bilinear forms $U:\ell_{\infty}^{N}\times\ell_{\infty}%
^{N}\rightarrow\mathbb{R}$. Using (\ref{aaaq}) and the monotonicity of the
$\ell_{r}$ norms we conclude that $L_{\mathbb{R},r}\leq1.$ On the other hand,
using $U_{0}(x,y)=x_{1}y_{1}$ in (\ref{p9}) we conclude that $L_{\mathbb{R}%
,r}\geq1$. Now we deal with the case $r\in\lbrack\frac{4}{3},2)$. Using a
simple interpolation argument we can show that if $\theta\in\left(
0,1\right)  $ is so that%
\[
\frac{1}{r}=\frac{\theta}{4/3}+\frac{1-\theta}{2},
\]
then%
\begin{align*}
\left(  \sum\limits_{i,j=1}^{N}\left\vert U(e_{i},e_{j})\right\vert
^{r}\right)  ^{\frac{1}{r}} &  \leq\left(  \left(  \sum\limits_{i,j=1}%
^{N}\left\vert U(e_{i},e_{j})\right\vert ^{4/3}\right)  ^{\frac{3}{4}}\right)
^{\theta}\left(  \left(  \sum\limits_{i,j=1}^{N}\left\vert U(e_{i}%
,e_{j})\right\vert ^{2}\right)  ^{\frac{1}{2}}\right)  ^{1-\theta}\\
&  \leq\left(  \sqrt{2}\right)  ^{\theta}\left\Vert U\right\Vert \\
&  =2^{\frac{2-r}{r}}\left\Vert U\right\Vert .
\end{align*}
On the other hand, by considering%
\begin{equation}
U_{1}(x,y)=x_{1}y_{1}+x_{1}y_{2}+x_{2}y_{1}-x_{2}y_{2}\label{in}%
\end{equation}
we have $\left\Vert U_{1}\right\Vert =2$ and thus%
\[
L_{\mathbb{R},r}\geq\frac{4^{\frac{1}{r}}}{\left\Vert U_{1}\right\Vert
}=2^{\frac{2-r}{r}}.
\]

\end{proof}

\subsection{Complex case\label{imn}}

We will show that%
\[
L_{\mathbb{C},r}\leq\left(  \frac{2}{\sqrt{\pi}}\right)  ^{\frac{4-2r}{r}%
}\text{ for all }r\in\left[  \frac{4}{3},2\right)
\]
and for $r\geq2$ it is straightforward that $L_{\mathbb{C},r}=1.$ However, we
do not prove that our estimates are optimal for $r\in\left[  \frac{4}%
{3},2\right)  .$

For the case of complex scalars the more accurate known estimate for the
constant in the Littlewood's $4/3$ theorem is
\[
L_{\mathbb{C},\frac{4}{3}}\leq\frac{2}{\sqrt{\pi}}.
\]

The same interpolation argument used in the case of real scalars can be used to show that if $\theta\in\left(
0,1\right)  $ is so that%
\[
\frac{1}{r}=\frac{\theta}{4/3}+\frac{1-\theta}{2},
\]
then%
\begin{align*}
\left(  \sum\limits_{i,j=1}^{N}\left\vert U(e_{i},e_{j})\right\vert
^{r}\right)  ^{\frac{1}{r}}  &  \leq\left(  \left(  \sum\limits_{i,j=1}%
^{N}\left\vert U(e_{i},e_{j})\right\vert ^{4/3}\right)  ^{\frac{3}{4}}\right)
^{\theta}\left(  \left(  \sum\limits_{i,j=1}^{N}\left\vert U(e_{i}%
,e_{j})\right\vert ^{2}\right)  ^{\frac{1}{2}}\right)  ^{1-\theta}\\
&  \leq\left(  \frac{2}{\sqrt{\pi}}\right)  ^{\theta}\left\Vert U\right\Vert
\\
&  =\left(  \frac{2}{\sqrt{\pi}}\right)  ^{\frac{4-2r}{r}}\left\Vert
U\right\Vert .
\end{align*}
For $r\geq2$, it is simple to show that the exact value is $L_{\mathbb{C}%
,r}=1.$ However our technique to provide lower estimates for $L_{\mathbb{R},r}$
seems useless for the complex case. The following table is illustrative:

\begin{center}%
\begin{tabular}
[c]{c|c|c}%
$r$ & $L_{\mathbb{C},r}\geq$ & $L_{\mathbb{C},r}\leq$\\\hline
& $1$ & $\left(  \frac{2}{\sqrt{\pi}}\right)  ^{\frac{4-2r}{r}}$\\\hline
$4/3$ & $1$ & $2/\sqrt{\pi}\approx1.128380$\\\hline
$1.93$ & $1$ & $1.0088$\\\hline
$1.95$ & $1$ & $1.0062$\\\hline
$1.99$ & $1$ & $1.0012$\\\hline
$\geq2$ & $1$ & $1$%
\end{tabular}

\end{center}

Since $L_{\mathbb{R},r}=2^{\frac{2-r}{r}}$ and $L_{\mathbb{C},r}\leq\left(
\frac{2}{\sqrt{\pi}}\right)  ^{\frac{4-2r}{r}}$ for all $r\in\left[  \frac
{4}{3},2\right)  $, it follows that%
\begin{equation}
L_{\mathbb{R},r}>L_{\mathbb{C},r} \label{tt}%
\end{equation}
for all nontrivial cases, i.e., whenever $r\in\left[  \frac{4}{3},2\right)  .$

\subsection{Some remarks}

We do not know if our estimates for complex scalars are optimal. We now stress
that a different technique, although quite effective for estimates of the
Bohnenblust--Hille inequality, provides worse results. This approach is based
on recent arguments from \cite{defant, DD2, jmaaBH}). For the sake of
completeness, let us recall two useful results:

\begin{theorem}
[Khinchine's inequality]\label{k} For all $0<p<\infty$, there exist constants
$A_{p}$ and $B_{p}$ such that
\begin{equation}
A_{p}\left(  \sum_{n=1}^{N}\left\vert a_{n}\right\vert ^{2}\right)  ^{\frac
{1}{2}}\leq\left(  \int_{0}^{1}\left\vert \sum_{n=1}^{N}a_{n}r_{n}\left(
t\right)  \right\vert ^{p}dt\right)  ^{\frac{1}{p}}\leq B_{p}\left(
\sum_{n=1}^{N}\left\vert a_{n}\right\vert ^{2}\right)  ^{\frac{1}{2}}
\label{lpo}%
\end{equation}
for every positive integer $N$ and scalars $a_{1},\ldots,a_{n}$ ($r_{n}$
denotes the $n$-$th$ Rademacher function)$.$
\end{theorem}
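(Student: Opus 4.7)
The plan is to reduce to the case $\sum_{n=1}^{N}|a_{n}|^{2}=1$ by homogeneity and set $S(t):=\sum_{n=1}^{N}a_{n}r_{n}(t)$. I would then treat the two bounds in \eqref{lpo} separately, exploiting the sub-Gaussian character of Rademacher sums for the upper bound and a classical Hölder interpolation trick for the lower bound.

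For the upper bound I would first establish the sub-Gaussian moment-generating estimate
\[
\int_{0}^{1} e^{sS(t)}\,dt \leq e^{s^{2}/2}, \qquad s\in\mathbb{R},
\]
which follows from $\int_{0}^{1}e^{sr_{n}(t)}\,dt=\cosh(s)\leq e^{s^{2}/2}$ together with independence of the Rademacher functions. A standard Chernoff argument then yields the tail bound $|\{t:|S(t)|\geq\lambda\}|\leq 2e^{-\lambda^{2}/2}$, and integrating against $p\lambda^{p-1}\,d\lambda$ produces a finite constant $B_{p}$ of the correct order $\sqrt{p}$. (For $0<p\leq 2$ a cleaner shortcut is to combine the orthonormality of $\{r_{n}\}$ with monotonicity of $L^{p}$ norms on $[0,1]$ to get $\|S\|_{p}\leq\|S\|_{2}=1$ directly.)

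The lower bound splits into two cases. For $p\geq 2$, monotonicity of $L^{p}$ norms on the probability space $[0,1]$ gives $\|S\|_{p}\geq\|S\|_{2}=1$, so one can take $A_{p}=1$. For $0<p<2$, I would fix any $q>2$ (e.g.\ $q=4$) and pick $\theta\in(0,1)$ satisfying $\tfrac{1}{2}=\tfrac{\theta}{p}+\tfrac{1-\theta}{q}$; the interpolation form of Hölder's inequality then yields
\[
1 \;=\; \|S\|_{2} \;\leq\; \|S\|_{p}^{\theta}\,\|S\|_{q}^{1-\theta} \;\leq\; \|S\|_{p}^{\theta}\,B_{q}^{1-\theta},
\]
and rearranging provides the admissible constant $A_{p}:=B_{q}^{-(1-\theta)/\theta}>0$.

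The main subtlety is one of ordering rather than difficulty: the lower bound for small $p$ cannot come from monotonicity (Rademacher sums are not uniformly bounded away from $0$), so the upper bound must be proved first at some $q>2$ and then bootstrapped via Hölder to give the lower bound at $p<2$. This scheme establishes Theorem \ref{k} qualitatively, which is all that is invoked in the sequel; the \emph{sharp} evaluations of $A_{p}$ and $B_{p}$, which are what actually drive the numerical improvements appearing in Theorem \ref{aaa} through the Steinhaus analogue $\widetilde{A_{p}}$, rest on much more delicate Haagerup/Sawa-type analysis and are imported from \cite{pel,ba,sawa} rather than reproved here.
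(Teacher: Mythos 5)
The paper does not prove Theorem \ref{k} at all: it is recalled as a classical fact (with the reader referred to \cite{Di} and, for sharp constants, to Haagerup \cite{haag}), and only its optimal constants $A_p$ are actually used. Your proof is a correct and standard textbook argument, so it necessarily differs from the paper by existing where the paper has nothing. The architecture is sound: sub-Gaussian MGF bound $\int_0^1 e^{sS}\,dt\le e^{s^2/2}$ from $\cosh s\le e^{s^2/2}$ and independence, Chernoff tail bound, integration by parts for the upper moment estimate; then the lower bound for $p\ge2$ by monotonicity of $L^p$ norms on a probability space, and for $0<p<2$ by log-convexity of $L^p$ norms (Hölder) bootstrapped off the already-proved upper bound at some $q>2$. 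You also correctly identify the logical ordering constraint (upper bound at large $q$ first, then Hölder back down).

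Two small points worth flagging. First, the MGF argument as written is for real $a_n$; the theorem in the paper allows arbitrary scalars. For complex $a_n$ write $S=S_1+iS_2$ with $S_1,S_2$ real Rademacher sums and bound $\|S\|_p$ by $\|S_1\|_p+\|S_2\|_p$ (or use $|S|^2=S_1^2+S_2^2$), which costs only an absolute constant and does not affect the qualitative statement; alternatively, apply the real case to $\operatorname{Re}(\bar{\alpha}S)$ for the appropriate unit scalar $\alpha$. Second, your parenthetical remark is right that for $0<p\le2$ the upper bound is immediate from $\|S\|_p\le\|S\|_2$ and orthonormality of $\{r_n\}$ in $L^2[0,1]$, so the sub-Gaussian machinery is only genuinely needed for $p>2$. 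You also correctly observe that the constants that actually matter in the paper (Haagerup's $A_p$ in \eqref{kkklll}--\eqref{0op} and the Steinhaus analogue $\widetilde{A_p}$ in \eqref{hhh}) are far beyond what this qualitative argument produces and are imported, not reproved.
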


For $p>p_{0}$ with $1<p_{0}<2$ defined by
\[
\Gamma\left(  \frac{p_{0}+1}{2}\right)  =\frac{\sqrt{\pi}}{2},
\]
a result due to U. Haagerup (\cite{haag}) asserts that
\begin{equation}
A_{p}:=\sqrt{2}\left(  \frac{\Gamma((p+1)/2)}{\sqrt{\pi}}\right)  ^{1/p}
\label{kkklll}%
\end{equation}
are the best constants satisfying (\ref{lpo}); for $p\leq p_{0}$ the best
values are
\begin{equation}
A_{p}=2^{\frac{1}{2}-\frac{1}{p}}. \label{0op}%
\end{equation}

\begin{theorem}
[{Blei, Defant et al., \cite[Lemma 3.1]{defant}}]\label{b} Let $A$ and $B$ be
two finite non-void index sets, and $(a_{ij})_{(i,j)\in A\times B}$ a scalar
matrix with positive entries, and denote its columns by $\alpha_{j}%
=(a_{ij})_{i\in A}$ and its rows by $\beta_{i}=(a_{ij})_{j\in B}.$ Then, for
$q,s_{1},s_{2}\geq1$ with $q>\max(s_{1},s_{2})$ we have
\[
\left(  \sum_{(i,j)\in A\times B}a_{ij}^{w(s_{1},s_{2})}\right)  ^{\frac
{1}{w(s_{1},s_{2})}}\leq\left(  \sum_{i\in A}\left\Vert \beta_{i}\right\Vert
_{q}^{s_{1}}\right)  ^{\frac{f(s_{1},s_{2})}{s_{1}}}\left(  \sum_{j\in
B}\left\Vert \alpha_{j}\right\Vert _{q}^{s_{2}}\right)  ^{\frac{f(s_{2}%
,s_{1})}{s_{2}}},
\]
with
\begin{align*}
w  &  :[1,q)^{2}\rightarrow\lbrack0,\infty),\text{ }w(x,y):=\frac
{q^{2}(x+y)-2qxy}{q^{2}-xy},\\
f  &  :[1,q)^{2}\rightarrow\lbrack0,\infty),\text{ }f(x,y):=\frac{q^{2}%
x-qxy}{q^{2}(x+y)-2qxy}.
\end{align*}

\end{theorem}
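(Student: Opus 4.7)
The plan is to bound $\sum_{(i,j)\in A\times B} a_{ij}^{w(s_1,s_2)}$ by a product of two factors --- one built from the row $\ell_q$-norms $\|\beta_i\|_q$, the other from the column $\ell_q$-norms $\|\alpha_j\|_q$ --- via two nested applications of H\"older's inequality followed by Minkowski's integral inequality. Once one commits to this strategy, the exponents $w(s_1,s_2)$ and $f(s_1,s_2)$ emerge as the unique solution of an exponent-matching system, so there is essentially no design choice to make.

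First, I would split $a_{ij}^w = a_{ij}^{w_1}\,a_{ij}^{w_2}$ with $w_1 + w_2 = w$ (to be determined), and apply H\"older in the inner sum over $j$ with conjugate exponents $p,p'$, forcing $w_1 p = q$ so that the first inner factor becomes $\|\beta_i\|_q^{q/p}$. Next, I would sum over $i$ and apply H\"older again with conjugate exponents $Q,Q'$, forcing $qQ/p = s_1$ so that the first outer factor is $\bigl(\sum_i \|\beta_i\|_q^{s_1}\bigr)^{1/Q}$. The second outer factor still has its sums arranged in the order ``first over $j$, then over $i$,'' so I would invoke Minkowski's integral inequality (valid once $Q'/p' \geq 1$) to swap them, and then force $w_2 Q' = q$ together with $qp'/Q' = s_2$ so that this factor collapses to $\bigl(\sum_j \|\alpha_j\|_q^{s_2}\bigr)^{1/p'}$. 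Raising the resulting inequality to the $1/w$-th power would complete the argument.

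What is left is to solve the six equations $1/p + 1/p' = 1$, $1/Q + 1/Q' = 1$, $w_1 p = q$, $qQ/p = s_1$, $w_2 Q' = q$, $qp'/Q' = s_2$ for the six unknowns $p,p',Q,Q',w_1,w_2$. A short computation gives
\[
p = \frac{q^2 - s_1 s_2}{s_1(q-s_2)}, \qquad p' = \frac{q^2 - s_1 s_2}{q(q-s_1)},
\]
from which $w = w_1 + w_2 = q/p + s_2/p'$ simplifies to the claimed $w(s_1,s_2)$, and the outer exponents $1/(wQ)$ and $1/(wp')$ simplify to $f(s_1,s_2)/s_1$ and $f(s_2,s_1)/s_2$ respectively, matching the statement of the theorem.

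The main --- indeed only --- obstacle is the algebraic bookkeeping; there is no deeper idea to locate. The hypothesis $q > \max(s_1,s_2)$ plays exactly the role one anticipates: it guarantees $p,p',Q,Q' \geq 1$ so that H\"older applies at each step, and it gives $Q'/p' = q/s_2 \geq 1$ so that Minkowski's integral inequality is admissible. If either of these inequalities on $s_1,s_2$ were an equality, the denominators in the formulas above would vanish, consistent with the strict inequality in the hypothesis.
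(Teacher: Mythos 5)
Theorem \ref{b} is not proved in this paper: it is quoted verbatim with a citation to \cite[Lemma~3.1]{defant}, so there is no in-paper proof against which to compare your argument. Evaluated on its own merits, your proof is correct. The exponent system you set up is consistent: with
\[
p=\frac{q^{2}-s_{1}s_{2}}{s_{1}(q-s_{2})},\qquad
p'=\frac{q^{2}-s_{1}s_{2}}{q(q-s_{1})},\qquad
Q=\frac{q^{2}-s_{1}s_{2}}{q(q-s_{2})},\qquad
Q'=\frac{q^{2}-s_{1}s_{2}}{s_{2}(q-s_{1})},
\]
one checks $1/p+1/p'=1$, $1/Q+1/Q'=1$, $w_{1}=q/p$, $w_{2}=q/Q'$, $w_{1}+w_{2}=w(s_{1},s_{2})$, and $Q'/p'=q/s_{2}>1$ under the hypothesis $q>\max(s_{1},s_{2})\geq 1$, so both H\"older steps and the Minkowski (triangle inequality in $\ell^{Q'/p'}$) step are legitimate, and the remaining exponents $1/(wQ)$ and $1/(wp')$ indeed equal $f(s_{1},s_{2})/s_{1}$ and $f(s_{2},s_{1})/s_{2}$. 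The hypothesis $q>\max(s_{1},s_{2})$ enters exactly where you say it does: it keeps $p,p',Q,Q'$ strictly greater than $1$ and forces $Q'/p'\geq 1$. This double-H\"older-plus-Minkowski scheme is the standard route to such mixed-norm Blei-type inequalities (going back to Littlewood's $(\ell^{1},\ell^{2})$ mixed estimate), and the argument in \cite{defant} is of this same flavour; you have not found a fundamentally different proof, but a correct and complete one.
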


As we already know, the Khinchine inequality for Steinhaus variables has
\[
\widetilde{A_{p}}=\left(  \Gamma\left(  \frac{p+2}{2}\right)  \right)
^{\frac{1}{p}}%
\]
as the optimal constant whenever $p\geq1$ (see \cite{ba, sawa}). So, using
this value for $\widetilde{A_{p}}$ and an argument similar to the proof of the
main result of \cite{jmaaBH} (which has its roots in \cite{defant}) with%
\[
\left\{
\begin{array}
[c]{c}%
s_{1}=s_{2}=\frac{2r}{4-r},\\
q=2
\end{array}
\right.
\]
in Theorem \ref{b}, we have%
\[
\left\{
\begin{array}
[c]{c}%
w(s_{1},s_{2})=r,\\
f(s_{1},s_{2})=1/2.
\end{array}
\right.
\]

and we obtain
\[
L_{\mathbb{C},r}\leq\left(  \left(  \Gamma\left(  \frac{\frac{2r}{4-r}+2}%
{2}\right)  \right)  ^{\frac{4-r}{2r}}\right)  ^{-1}=\left(  \Gamma\left(
\frac{4}{4-r}\right)  \right)  ^{\frac{r-4}{2r}}%
\]
for all $r\in\left[  \frac{4}{3},2\right)  $. But a direct inspection shows
that%
\[
\left(  \Gamma\left(  \frac{4}{4-r}\right)  \right)  ^{\frac{r-4}{2r}}>\left(
\frac{2}{\sqrt{\pi}}\right)  ^{\frac{4-2r}{r}}%
\]
for all $r\in\left(  \frac{4}{3},2\right)  $ and thus the estimates of
Subsection \ref{imn} are more precise.


\begin{thebibliography}{99}                                                                                               %


\bibitem {AA}S. Aaronson, A. Ambainis, The Need for Structure in Quantum
Speedups, In Proceedings of ICS 2011, 338--352. arXiv:0911.0996

\bibitem {ba}A. Baernstein, R.C. Culverhouse, Majorization of sequences, sharp
vector Khinchin inequalities, and bisubharmonic functions, Studia Math. 152
(2002), 231--248.

\bibitem {boas}H.P. Boas, D. Khavinson, Bohr's power series theorem in several
variables, Proc. Amer. Math. Soc. 125 (1997), 2975--2979.

\bibitem {bh}H.F. Bohnenblust, E. Hille, On the absolute convergence of
Dirichlet series, Ann. of Math. (2) 32 (1931), 600--622.

\bibitem {Davie}A.M. Davie, Quotient algebras of uniform algebras, J. London
Math. Soc. (2) 7 (1973), 31--40.

\bibitem {annals2011}A. Defant, L. Frerick, J. Ortega-Cerd%
$\backslash$%
`\{a\}, M. Ounaies, K. Seip, The Bohnenblust--Hille inequality for homogeneous
polynomials is hypercontractive, Ann. of Math. (2), 174 (2011), 485--497.

\bibitem {defant}A. Defant, D. Popa, U. Schwarting, Coordinatewise multiple
summing operators in Banach spaces, J. Funct. Anal. 259 (2010), 220--242.

\bibitem {defant2}A. Defant, P. Sevilla-Peris, A new multilinear insight on
Littlewood's 4/3-inequality, J. Funct. Anal. 256 (2009),1642--1664.

\bibitem {Di}J. Diestel, H. Jarchow, A. Tonge, Absolutely summing operators,
Cambridge Studies in Advanced Mathematics 43, Cambridge University Press,
Cambridge, 1995.

\bibitem {DD}D. Diniz, \ G.A. Mu\~{n}oz-Fern\'{a}ndez, D. Pellegrino, J.B.
Seoane-Sep\'{u}lveda, The asymptotic growth of the constants in the
Bohnenblust--Hille inequality is optimal, J. Funct. Anal. 263 (2012), 415--428.

\bibitem {DD2}D. Diniz, \ G.A. Mu\~{n}oz-Fern\'{a}ndez, D. Pellegrino, J.B.
Seoane-Sep\'{u}lveda, Lower bounds for the constants in the Bohnenblust--Hille
inequality: the case of real scalars, Proc. Amer. Math. Soc., in press.

\bibitem {garling}D.J.H. Garling, Inequalities: a journey into linear
analysis, Cambridge University Press, Cambridge, 2007.

\bibitem {haag}U. Haagerup, The best constants in the Khintchine inequality,
Studia Math. 70 (1981), 231--283.

\bibitem {junge2010}M. Junge, C. Palazuelos, D. P\'{e}rez-Garc{\'{\i}}a, I. Villanueva,
M.M. Wolf, Unbounded violations of bipartite Bell inequalities via operator
space theory, Comm. Math. Phys. 300 (2010), 715--739.

\bibitem {Ka}S. Kaijser, Some results in the metric theory of tensor products,
Studia Math. 63 (1978), 157--170.

\bibitem {Litt}J.E. Littlewood, On bounded bilinear forms in an infinite
number of variables, Q. J. Math. 1 (1930), 164--174

\bibitem {LAMA}G.A. Munoz-Fernandez, D.\ Pellegrino, J.B. Seoane-Sep\'{u}lveda,
Estimates for the asymptotic behavior of the constants in the
Bohnenblust--Hille inequality, Linear Multilinear Algebra 60 (2012), 573--582.

\bibitem {montanaro}A. Montanaro, Some applications of hypercontractive
inequalities in quantum information theory, arXiv:1208.0161v2 [quant-ph].

\bibitem {AlarconPellegrino}D. Nu\~{n}ez-Alarc\'{o}n, D. Pellegrino, On the growth of
the optimal constants of the multilinear Bohnenblust--Hille inequality,
arXiv:1205.2385v1 [math.FA].

\bibitem {APa}D. Nu\~{n}ez-Alarc\'{o}n, D. Pellegrino, J.B. Seoane-Sep\'{u}lveda, D,M.
Serrano, Rodriguez, There exist multilinear Bohnenblust--Hille constants
$\left(  C_{n}\right)  _{n=1}^{\infty}$ with $\lim_{n}C_{n+1}-C_{n}=0$,
arXiv:1207.0124 [math.FA].

\bibitem {pel}A. Pe\l czy\'{n}ski, Norms of classical operators in function spaces,
Colloquium in honor of Laurent Schwartz, Vol. 1 (Palaiseau), Asterisque 131
(1985), 137--162.

\bibitem {jmaaBH}D. Pellegrino, J.B. Seoane-Sep\'{u}lveda, New upper bounds for
the constants in the Bohnenblust--Hille inequality, J. Math. Anal. Appl. 386
(2012), 300--307.

\bibitem {Que}H. Queff\'{e}lec, H. Bohr's vision of ordinary Dirichlet series: old
and new results, J. Anal. 3 (1995), 43--60.

\bibitem {sawa}J. Sawa, The best constant in the Khinchine inequality for
complex Steinhaus variables, the case $p=1$, Studia Math. 81 (1995), 107--126.
\end{thebibliography}
\end{document}